\newcounter{myctr}
\begin{document}

\makeatletter
\def\@biblabel#1{[#1]}
\makeatother

\markboth{Michael Barnsley, Brendan Harding}{Fractal transformations in 2 and 3 dimensions}

%
\catchline{}{}{}{}{}
%

\title{FRACTAL TRANSFORMATIONS IN 2 AND 3 DIMENSIONS\\
Dedicated to Benoit Mandelbrot
}

\author{\footnotesize MICHAEL BARNSLEY
}

\address{Mathematics Department, Australian National University\\
Canberra, Australian Capital Territory 0200, Australia
\\
mbarnsley@aol.com}

\author{BRENDAN HARDING}

\address{Mathematics Department, Australian National University\\
Canberra, Australian Capital Territory 0200, Australia\\
brendan.harding@anu.edu.au}

\maketitle

\begin{history}
\received{(received date)}
\revised{(revised date)}
\end{history}

\begin{abstract}



We present some work relating to fractal transformations on masked iterated function systems and demonstrate how well known algorithms for generating fractal transformations can be modified for these systems. We also demonstrate that these algorithms work equally well when applied to three dimensional data sets and suggest some possible applications to special effects and modelling.



\end{abstract}

\keywords{Keyword1; keyword2; keyword3.}  

\newpage 

\section*{INTRODUCTION}

Iterated function systems (IFS's) have been used in many applications as an effective way to describe and generate sets. For example it is well known that a complete family of fractal ferns can be generated using only 4 contractive mappings on the Euclidean plane. Sets generated by an iterated function system can be made to appear much more complex by introducing shading and transparency effects by means of plotting the approximate fractal meaasure \cite{bar93:1993:FE}. Further, the idea of colour stealing \cite{bar06:2006:SF} allows for such objects to be coloured in interesting ways. Fractal transformations have been recently introduced \cite{bar09:2009:TSRS} and use the intricate structure generated by IFS's to produce complex mappings between attractor sets. The main idea of fractal transformations is to generate an image on an attractor by colour stealing from another image supported on an attractor with a very similar IFS structure. The result of this transformation is generally not continuous. However, there are known conditions for which these transformations are continuous and even homeomorphisms \cite{bar09:2009:TSRS}. In section 1 we review the key definitions and theorems relevant to this paper including some recently published results extending the theory to masked IFS's \cite{bhi11:2011:TFI}.

The chaos game algorithm \cite{bardem85:1985:IFS} is a fast and well-known algorithm for producing images of attractor sets. However the chaos game takes an undetermined amount of time to produce a complete image, this is particularly relevant when plotting high resolution images. Pixel chaining \cite{sla07:2007:RTIS} is an algorithm which overcomes this issue but can still take some time to complete. In section 2 we review the many algorithms that can be used for generating attractor sets. We discuss the advantages and disadvantages from a computational perspective and describe modifications that allow these methods to be applied to fractal transformations. Further, we describe new ways of applying these algorithms such that they may be used to plot transformations arrising from masked IFS's. We also present ideas for the paralellisation of such algorithms.


In section 3 we review the application of such transformations in the artistic transformation of images \cite{bar09:2009:TSRS}, modelling of objects \cite{demko:construction} and image synthesis \cite{sla07:2007:RTIS} in 2 dimensions. This will lead us into section 4 where we extend these ideas demonstrating ???for the first time??? how they may be applied to three dimensional data sets. We conclude in section 5 with some ideas for future work and improvements.

This paper is written in memory of Benoit Mandelbrot who has given the authors of this paper much inspiration in the pursuit to better describe the world around us using the tools of fractal geometry.

\section{THEORY OF FRACTAL TRANSFORMATIONS}

In this section we will review the definitions and theorems that are necessary to define these fractal transformations. Initially these transformations were described in the context of hyperbolic IFS's on compact metric spaces \cite{bar09:2009:TSRS}. It was also demonstrated how they can be constructed in two dimensions on attractors in which the images of the attractor overlap on a set of measure 0 (two dimensional Lebesgue measure). Recent work \cite{bhi11:2011:TFI} has shown that these transformations may be contructed in a much more general setting which we will review again here.

Let $X$ be a non-empty compact Hausdorff space and let $K(X)$ be the set of nonempty compact subsets of $X$. When endowed with the Vietoris topology this becomes a compact Hausdorff space. Now suppose that $\{f_i :X\rightarrow X\mid i\in I\}$ is a sequence of continuous functions where $I=\{1,2,\dots N\}$ is a finite index set with the discrete topology. Then
\[\mathcal{F}:=(X;f_1,\dots,f_N)\]
is called an \textit{iterated function system} on $X$. We can then define the mapping
\[\Pi:I^\infty \rightarrow K(X),\sigma\mapsto\bigcap_{k=1}^{\infty}{f_{\sigma_1}\circ f_{\sigma_2}\circ\cdots\circ f_{\sigma_k}(X)}\]
for all sequences $\sigma=\sigma_1 \sigma_2 \sigma_3 \dots\in I^\infty$. This is well-defined because $\Pi(\sigma)$  is the intersection of a nested sequence of nonempty compact sets. 
\begin{definition} Let $\mathcal{F}:=(X;f_1,\dots,f_N)$ be an IFS over a compact Hausdorff space X. If $\Pi(\sigma)$ is a singleton for all $\sigma\in I^\infty$ then $\mathcal{F}$ is said to be point-fibred, and the coding map of $\mathcal{F}$ is defined by
\[\pi:I^\infty \rightarrow A,\{\pi(\sigma)\}=\Pi(\sigma)\]
where $A\subset X$ denotes the range of $\pi$.
\end{definition}
These defintions and the results that follow provide a much more general setting than the classical work involving contraction mappings on compact metric spaces. We now introduce a notion of attractor in this setting.
\begin{definition} Let $\mathcal{F}$ be an IFS on a compact Hausdorff space $X$. An attractor of $\mathcal{F}$ is a set $A\in K(X)$ that satisfies the following:
\begin{itemize}
\item[(i)] $\mathcal{F}(A)=A$, and
\item[(ii)] there exists an open set $U\subset X$ such that $A\subset U$ and $lim_{k\rightarrow\infty}{\mathcal{F}^k (B)=A}$ for all $B\in K(U)$. (The limit is with respect to the Vietoris topology on $K(X)$.)
\end{itemize}
We also denote the largest set $\mathcal{B}\subset X$ such that the second condition holds for all $B\in K(\mathcal{B})$ to be the basin of $A$. 
\end{definition}
We summarise some results that arise from these definitions in the following theorem.
\begin{theorem}
If $\mathcal{F}$ is a point-fibred IFS on a compact Hausdorff space $X$ then 
\begin{itemize}
\item[(i)] the coding map $\pi:I^\infty\rightarrow A$ is continuous;
\item[(ii)] $\mathcal{F}:K(X)\rightarrow K(X)$ has a unique fixed-point $A\in K(X)$;
\item[(iii)] $A$ is the unique attractor of $\mathcal{F}$;
\item[(iv)] $A$ is equal to the range of the coding map $\pi$, namely $A=\pi(I^\infty)$ noting that
\[\pi(\sigma)=\lim_{k\rightarrow\infty}{f_{\sigma\mid k}(a)}\text{ where }f_{\sigma\mid k}:=f_{\sigma_1}\circ f_{\sigma_2}\circ\cdots\circ f_{\sigma_k};\]
\item[(v)] the basin of $A$ is $X$;
\item[(vi)] if  $B\in K(X)$ then $\{\pi(\sigma)\}=\lim_{k\rightarrow\infty}{f_{\sigma\mid k}(B)}$ for all $\sigma\in I^\infty$.
\end{itemize}
\end{theorem}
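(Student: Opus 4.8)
The plan is to derive all six items from two structural facts about compact Hausdorff spaces, unwinding the statements in the order (i), (iv), (ii), (iii), (v), (vi), so that later parts may quote earlier ones. Throughout I read $\mathcal{F}(B)=\bigcup_{i\in I}f_i(B)$, which indeed maps $K(X)$ to $K(X)$ since a finite union of continuous images of compacta is a nonempty compact set.

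First I would record the workhorse lemma: if $C_1\supseteq C_2\supseteq\cdots$ is a nested sequence of nonempty compact subsets of $X$, then $\bigcap_k C_k$ is nonempty and compact, $C_k\to\bigcap_k C_k$ in the Vietoris topology, and for every open $U\supseteq\bigcap_k C_k$ one has $C_k\subseteq U$ for all sufficiently large $k$. The absorption statement follows because $X$ Hausdorff makes each $C_k$ closed, so $\{C_k\setminus U\}_k$ is a nested family of compacta with empty intersection and the finite intersection property kills one (hence all later) of them; the Vietoris convergence is then checked on the two kinds of subbasic neighbourhood of $\bigcap_k C_k$ (those of the form $\{D:D\subseteq U\}$ by the absorption just shown, those of the form $\{D:D\cap V\neq\emptyset\}$ because $\bigcap_k C_k\subseteq C_k$). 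Applying this to $C_k=f_{\sigma\mid k}(X)$ — nested and nonempty because each $f_i$ is continuous and $X$ compact — gives $f_{\sigma\mid k}(X)\to\Pi(\sigma)=\{\pi(\sigma)\}$ for every $\sigma$.

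For (i), given $\sigma$ and an open $U\ni\pi(\sigma)$, the lemma produces $k$ with $f_{\sigma\mid k}(X)\subseteq U$; since $f_{\tau\mid k}(X)=f_{\sigma\mid k}(X)$ and $\{\pi(\tau)\}=\bigcap_j f_{\tau\mid j}(X)\subseteq f_{\tau\mid k}(X)$ for every $\tau$ in the cylinder $[\sigma_1\cdots\sigma_k]$, that cylinder is an open neighbourhood of $\sigma$ carried by $\pi$ into $U$, so $\pi$ is continuous. For (iv), $A=\pi(I^\infty)$ is the range of $\pi$ by definition and is nonempty compact because $I^\infty$ is compact (finite alphabet, Tychonoff) and $\pi$ continuous; the limit formula is immediate since $f_{\sigma\mid k}(a)\in f_{\sigma\mid k}(X)\to\{\pi(\sigma)\}$ for any $a\in X$. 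Here I would also extract the shift identity $f_i\circ\pi(\sigma)=\pi(i\sigma)$, obtained by pushing $f_i$ through the limit $\pi(\sigma)=\lim_k f_{\sigma\mid k}(a)$ by continuity of $f_i$, and note (vi): for $B\in K(X)$ the sets $f_{\sigma\mid k}(B)$ are nonempty, lie inside $f_{\sigma\mid k}(X)$, hence converge to $\{\pi(\sigma)\}$ by the same neighbourhood check as in the lemma. For (ii), the shift identity gives $\mathcal{F}(A)=\bigcup_i f_i(\pi(I^\infty))=\bigcup_i\{\pi(i\sigma):\sigma\in I^\infty\}=\pi(I^\infty)=A$, so $A$ is a fixed point; if $\mathcal{F}(B)=B$ with $B\in K(X)$, then resolving $b=f_{i_1}(b_1)$, $b_1=f_{i_2}(b_2),\dots$ recursively inside $B$ puts $b\in f_{\sigma\mid k}(X)$ for all $k$ (with $\sigma=i_1i_2\cdots$), so $b=\pi(\sigma)\in A$, while for any $\sigma$ and $b\in B$ the points $f_{\sigma\mid k}(b)$ all lie in $B$ (as $f_i(B)\subseteq B$) and converge to $\pi(\sigma)$, so $\pi(\sigma)\in B$ by closedness; hence $B=A$.

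Finally, for (iii) and (v), $A$ is an attractor with basin $X$ once $\mathcal{F}^k(B)\to A$ is established for every $B\in K(X)$, with admissible open set $U=X$. The lower Vietoris part reuses the above: an open $V$ meeting $A$ contains some $\pi(\sigma)=\lim_k f_{\sigma\mid k}(b)$, and $f_{\sigma\mid k}(b)\in\mathcal{F}^k(B)$. The upper part — $\mathcal{F}^k(B)\subseteq U$ eventually for a given open $U\supseteq A$ — is the one delicate step: for each $\sigma$ the lemma gives a length $k_\sigma$ with $f_{\sigma\mid k_\sigma}(X)\subseteq U$, the cylinders $[\sigma_1\cdots\sigma_{k_\sigma}]$ cover the compact space $I^\infty$, a finite subcover produces a single length $K$ with $f_w(X)\subseteq U$ for every word $w$ of length $K$, hence (via a prefix) for every longer word, so $\mathcal{F}^k(B)\subseteq U$ for all $k\geq K$. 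Since $X$ is an admissible open neighbourhood of $A$, the basin, being the largest such set inside $X$, is $X$ itself; and uniqueness of the attractor is free, since any attractor is a fixed point of $\mathcal{F}$ and so equals $A$ by (ii). I expect the main obstacle to be precisely this uniform-absorption argument — converting the pointwise facts ``$f_{\sigma\mid k}(X)$ eventually enters $U$'' into one uniform word length by compactness of $I^\infty$; the rest is bookkeeping with the Vietoris topology and the point-fibred hypothesis.
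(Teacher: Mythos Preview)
Your argument is correct and well organized: the nested-compact ``absorption'' lemma is exactly the right engine, the shift identity $f_i\circ\pi=\pi\circ(i\,\cdot)$ cleanly gives $\mathcal{F}(A)=A$, and the one nontrivial step --- upgrading the pointwise inclusions $f_{\sigma\mid k_\sigma}(X)\subseteq U$ to a uniform word length via compactness of $I^\infty$ --- is handled properly.

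There is, however, nothing to compare it to in this paper. The paper does not prove the theorem at all; its entire proof reads ``See \cite{bhi11:2011:TFI}, Theorem 2.2 and Theorem 2.4.'' So your write-up is not an alternative route but a self-contained substitute for a deferred citation. What you have written is essentially the standard argument one would expect to find behind that citation (Kieninger/Barnsley--Hutchinson--Igudesman style: Vietoris convergence of nested compacta, continuity of $\pi$ via cylinder neighbourhoods, uniqueness of the fixed point by the two inclusions, and the uniform-absorption compactness trick for the attractor property). If anything, your treatment is slightly more explicit than typical presentations in separating the lower- and upper-Vietoris halves of $\mathcal{F}^k(B)\to A$.
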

\begin{proof}
See \cite{bhi11:2011:TFI}, Theorem 2.2 and Theorem 2.4.
\end{proof}
This leaves us with one more definition fundamental to the developement of fractal transformations, that is the notion of an address.
\begin{definition} Let $\mathcal{F}$ be an IFS on a compact Hausdorff space $X$. The set $I^\infty$ is called the code space of $\mathcal{F}$ and a point $\sigma\in I^\infty$ is called an address of $\pi(\sigma)\in A$.
\end{definition}

For the remainder of this sections we will refer to $\mathcal{F},\mathcal{G},\mathcal{H}$ as being point-fibred IFS's on the compact Hausdorff space $X$. We will also refer to an IFS as being injective and/or open when the maps that it comprises are injective and/or open respecively.

We now have the essential definitions and theorems necessary to define a generalised theory of fractal transformations. The coding map of an IFS generates an intricate address structure on the attractor and by mapping from one attractor to another by means of the addresses we are able to define fractal transformations. There is one subtle issue that must be overcome to make these transformations well-defined and this relates to our definition of \textit{an} address. It is possible that any given point $a\in A$ may have many addresses, even infinitely many. Originally this was dealt with by defining the notion of a top address for a given point \cite{bar09:2009:TSRS}. We will come back to this later where we present a more general way of picking out a single address for each point $a\in A$. For now we give the following definition.

\begin{definition}
Let $\pi:I^\infty \rightarrow A$ be the coding map of $\mathcal{F}$. $\Omega\subset I^\infty$ is called an address space for $\mathcal{F}$ if $\pi(\Omega)=A$ and $\pi\mid_\Omega :\Omega\rightarrow A\subset X$ is one-to-one. The corresponding map
\[\tau:A\rightarrow\Omega,x\mapsto(\pi\mid_\Omega)^{-1}(x),\]
is called a section of $\pi$.
\end{definition}

A section has many useful properties which we summarise in the following theorem (see \cite{bhi11:2011:TFI} Theorem 3.2).
\begin{theorem} Let $\mathcal{F}=(X;f_1,\dots,f_N)$ be a point-fibred IFS on the compact Hausdorff space $X$ with attractor $A$, code space $I^\infty$ and coding map $\pi:I^\infty \rightarrow A$. If $\tau:A\rightarrow\Omega$ is a section of $\pi$ then
\begin{itemize}
\item[(i)] $\tau:A\rightarrow\Omega$ is bijective;
\item[(ii)] $\tau^{-1}:\Omega\rightarrow A$ is continuous;
\item[(iii)] $\pi\circ\tau=i_A$, the identity map on $A$, and $\tau\circ(\pi\mid_\Omega)=i_\Omega$, the identity map on $\Omega$;
\item[(iv)] if $\mathcal{F}$ is injective and $f_i (A)\cap f_j (A)=\emptyset$ for all $i,j\in I$ with $i\neq j$, then $\Omega=I^\infty$;
\item[(v)] if $\Omega$ is closed then $\tau:A\rightarrow\Omega$ is a homeomorphism;
\item[(vi)] if $A$ is connected and $A$ is not a singleton, then $\tau:A\rightarrow\Omega$ is not continuous.
\end{itemize}
\end{theorem}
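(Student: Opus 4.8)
The plan is to dispatch the six parts in order, since each rests only on the definition of an address space together with facts already in hand. Parts~(i) and~(iii) are immediate unwindings of the definition: by hypothesis $\pi\mid_\Omega:\Omega\to A$ is one-to-one and onto, hence a bijection, and $\tau$ is \emph{defined} to be its inverse; thus $\tau$ is bijective, $\pi\circ\tau=i_A$, and $\tau\circ(\pi\mid_\Omega)=i_\Omega$. Part~(ii) is equally short: $\tau^{-1}=\pi\mid_\Omega$ is a restriction of the coding map $\pi$, which is continuous by the preceding theorem, and the restriction of a continuous map is continuous.

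For~(iv) I would first record the self-referential identity $\pi(\sigma)=f_{\sigma_1}\bigl(\pi(S\sigma)\bigr)$, where $S$ denotes the shift map on $I^\infty$; this follows from the formula $\pi(\sigma)=\lim_{k\to\infty}f_{\sigma\mid k}(a)$ of the preceding theorem by writing $f_{\sigma\mid k}=f_{\sigma_1}\circ f_{\sigma_2\cdots\sigma_k}$ and using continuity of $f_{\sigma_1}$. In particular $\pi(\sigma)\in f_{\sigma_1}(A)$ for every $\sigma\in I^\infty$. Now assume $\mathcal{F}$ is injective and $f_i(A)\cap f_j(A)=\emptyset$ whenever $i\neq j$, and suppose $\pi(\sigma)=\pi(\omega)$. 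Then this common point lies in $f_{\sigma_1}(A)\cap f_{\omega_1}(A)$, forcing $\sigma_1=\omega_1$; cancelling the injective map $f_{\sigma_1}$ gives $\pi(S\sigma)=\pi(S\omega)$, and an induction on the coordinates yields $\sigma=\omega$. Hence $\pi$ is injective. Since any address space satisfies $\pi(\Omega)=A=\pi(I^\infty)$, injectivity of $\pi$ forces $\Omega=I^\infty$.

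Part~(v): if $\Omega$ is closed, then it is a closed subset of the compact Hausdorff space $I^\infty$, hence compact, so $\tau^{-1}=\pi\mid_\Omega$ is a continuous bijection from a compact space onto the Hausdorff space $A$, and is therefore a homeomorphism; consequently so is $\tau$. Part~(vi): suppose $A$ is connected, $A$ is not a singleton, and, for contradiction, that $\tau$ is continuous. Then $\Omega=\tau(A)$ is the continuous image of a connected set, hence connected; but $I^\infty$ is totally disconnected (its topology is generated by clopen cylinder sets), so every connected subset is a singleton. Thus $\Omega$ is a singleton, whence $A=\pi(\Omega)$ is a singleton, contradicting the hypothesis. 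Therefore $\tau$ is not continuous.

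The only step carrying genuine content is~(iv); the main obstacle there is being careful that the shift identity for $\pi$ is genuinely available in this general point-fibred setting — rather than imported from the classical metric-contraction theory — and that the closing deduction really does use $\pi(I^\infty)=A$. The remaining parts are a matter of quoting continuity of $\pi$, the compact-to-Hausdorff homeomorphism lemma, and the total disconnectedness of the code space.
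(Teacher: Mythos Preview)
Your proof is correct in all six parts. The paper itself gives no argument here: its entire proof is the single line ``See [bhi11:2011:TFI], Theorem~3.2,'' so there is nothing in-paper to compare against; the direct arguments you supply---unwinding the definition for (i)--(iii), the shift identity and induction for (iv), the compact-to-Hausdorff bijection lemma for (v), and total disconnectedness of $I^\infty$ for (vi)---are exactly the standard ones one would expect to find in the cited source.
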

\begin{proof}
See \cite{bhi11:2011:TFI}, Theorem 3.2.
\end{proof}

Now we define a fractal transformation.

\begin{definition} Let $\mathcal{F}=(X;f_1,\dots,f_N)$ and $\mathcal{G}=(Y;g_1,\dots,g_N)$ be a point-fibred IFS's on the compact Hausdorff spaces $X$ and $Y$ respectively. Let $A_\mathcal{F} \subset X$ be the attractor of $\mathcal{F}$ and $\pi_\mathcal{F} :I^\infty \rightarrow A_\mathcal{F}$ be the coding map of $\mathcal{F}$. Similarly let $A_\mathcal{G} \subset Y$ be the attractor of $\mathcal{G}$ and $\pi_\mathcal{G} :I^\infty \rightarrow A_\mathcal{G}$ be the coding map of $\mathcal{G}$. If $\tau_\mathcal{F}:A_\mathcal{F} \rightarrow \Omega_\mathcal{F}\subset I^\infty$ is a section of $pi_\mathcal{F}$ then the corresponding fractal transformation is defined to be 
\[T_\mathcal{FG}:A_\mathcal{F} \rightarrow A_\mathcal{G} ,x\mapsto\pi_\mathcal{G} \circ\tau_\mathcal{F} (x).\]
\end{definition}

The following theorem describes some coontinuity properties of fractal transformations and is of particularly important as it makes these transformations interesting for the applications we will later describe.

\begin{theorem} Let $\mathcal{F}$ and $\mathcal{G}$ be defined as in the previous definition with $T_\mathcal{FG}:A_\mathcal{F} \rightarrow A_\mathcal{G}$ being the corresponding fractal transformation.
\begin{itemize}
\item[(i)] If, whenever $\sigma,\omega\in\overline{\Omega_\mathcal{F}}$, $\pi_\mathcal{F}(\sigma)=\pi_\mathcal{F}(\omega)\Rightarrow\pi_\mathcal{G}(\sigma)=\pi_\mathcal{G}(\omega)$, then $T_\mathcal{FG}$ is continuous.
\item[(ii)] If $\Omega_\mathcal{G}:=\Omega_\mathcal{F}$ is an address space for $\mathcal{G}$, and if, whenever $\sigma,\omega\in\overline{\Omega_\mathcal{F}}$, $\pi_\mathcal{F}(\sigma)=\pi_\mathcal{F}(\omega)\Leftrightarrow\pi_\mathcal{G}(\sigma)=\pi_\mathcal{G}(\omega)$, then $T_\mathcal{FG}$ is a homeomorphism and $T_\mathcal{GF}=T_\mathcal{FG}^{-1}$.
\end{itemize}
\end{theorem}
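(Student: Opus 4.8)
\emph{Proof proposal.}
The plan is to lift everything to the closed---hence compact---set $\overline{\Omega_\mathcal{F}}\subseteq I^\infty$ and exploit that a continuous surjection from a compact space onto a Hausdorff space is a quotient map. Since $I$ is finite and discrete, $I^\infty$ is compact Hausdorff, so $\overline{\Omega_\mathcal{F}}$ is compact Hausdorff, and by part (i) of the theorem on point-fibred IFS's both coding maps restrict to continuous maps $p:=\pi_\mathcal{F}\mid_{\overline{\Omega_\mathcal{F}}}:\overline{\Omega_\mathcal{F}}\to X$ and $q:=\pi_\mathcal{G}\mid_{\overline{\Omega_\mathcal{F}}}:\overline{\Omega_\mathcal{F}}\to Y$. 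From $A_\mathcal{F}=\pi_\mathcal{F}(\Omega_\mathcal{F})\subseteq\pi_\mathcal{F}(\overline{\Omega_\mathcal{F}})\subseteq\pi_\mathcal{F}(I^\infty)=A_\mathcal{F}$ I get that $p$ is a continuous surjection onto $A_\mathcal{F}$; being a map out of a compact space into the Hausdorff space $A_\mathcal{F}\subseteq X$ it is closed, hence a quotient map, and the same argument shows $q$ maps onto $A_\mathcal{G}$.

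For part (i), the hypothesis says precisely that $q$ is constant on every fibre $p^{-1}(x)$, $x\in A_\mathcal{F}$, so the universal property of the quotient map $p$ produces a unique continuous $h:A_\mathcal{F}\to A_\mathcal{G}$ with $h\circ p=q$. To identify $h$ with $T_\mathcal{FG}$, fix $x\in A_\mathcal{F}$ and put $\sigma:=\tau_\mathcal{F}(x)\in\Omega_\mathcal{F}\subseteq\overline{\Omega_\mathcal{F}}$; by the identity $\pi_\mathcal{F}\circ\tau_\mathcal{F}=i_{A_\mathcal{F}}$ (part (iii) of the theorem on sections) we have $p(\sigma)=x$, so $h(x)=h(p(\sigma))=q(\sigma)=\pi_\mathcal{G}(\tau_\mathcal{F}(x))=T_\mathcal{FG}(x)$. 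Hence $T_\mathcal{FG}=h$ is continuous.

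For part (ii), let $\tau_\mathcal{G}:A_\mathcal{G}\to\Omega_\mathcal{G}$ be the section of $\pi_\mathcal{G}$ associated with the address space $\Omega_\mathcal{G}:=\Omega_\mathcal{F}$, and $T_\mathcal{GF}=\pi_\mathcal{F}\circ\tau_\mathcal{G}$ the corresponding transformation. I first verify the inverse relationship formally: for $x\in A_\mathcal{F}$ the address $\sigma:=\tau_\mathcal{F}(x)$ lies in $\Omega_\mathcal{F}=\Omega_\mathcal{G}$, so $\tau_\mathcal{G}(\pi_\mathcal{G}(\sigma))=\sigma$ by the identity $\tau_\mathcal{G}\circ(\pi_\mathcal{G}\mid_{\Omega_\mathcal{G}})=i_{\Omega_\mathcal{G}}$, whence $T_\mathcal{GF}(T_\mathcal{FG}(x))=\pi_\mathcal{F}(\sigma)=x$; the symmetric computation (using $\tau_\mathcal{G}(y)\in\Omega_\mathcal{G}=\Omega_\mathcal{F}$) gives $T_\mathcal{FG}(T_\mathcal{GF}(y))=y$ for all $y\in A_\mathcal{G}$. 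Thus $T_\mathcal{FG}$ is a bijection with inverse $T_\mathcal{GF}$. The ``$\Rightarrow$'' direction of the biconditional and part (i) make $T_\mathcal{FG}$ continuous, and since $A_\mathcal{F}$ is compact and $A_\mathcal{G}$ is Hausdorff this continuous bijection is automatically a homeomorphism, so $T_\mathcal{GF}=T_\mathcal{FG}^{-1}$ is continuous as well (equivalently, noting $\overline{\Omega_\mathcal{G}}=\overline{\Omega_\mathcal{F}}$, the ``$\Leftarrow$'' direction with part (i) applied to the pair $(\mathcal{G},\mathcal{F})$ gives continuity of $T_\mathcal{GF}$ directly).

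The one genuinely non-routine step is the quotient-map argument in part (i): checking that $p=\pi_\mathcal{F}\mid_{\overline{\Omega_\mathcal{F}}}$ is not merely a continuous surjection but a quotient map---this is exactly where compactness of $I^\infty$ and the Hausdorff hypothesis on $X$ are used---and then verifying that the factorisation supplied by the universal property coincides with $T_\mathcal{FG}$ on \emph{all} of $A_\mathcal{F}$, which works because $\pi_\mathcal{F}(\Omega_\mathcal{F})$ already exhausts $A_\mathcal{F}$. Everything else reduces to bookkeeping with the section identities of the earlier theorem.
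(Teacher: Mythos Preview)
Your argument is correct. The quotient-map strategy is exactly the right idea: passing to the compact set $\overline{\Omega_\mathcal{F}}$ makes $p=\pi_\mathcal{F}\mid_{\overline{\Omega_\mathcal{F}}}$ a closed continuous surjection onto $A_\mathcal{F}$, the hypothesis in (i) is precisely the fibre-constancy needed to factor $q$ through $p$, and your identification $h=T_\mathcal{FG}$ via the section identity $\pi_\mathcal{F}\circ\tau_\mathcal{F}=i_{A_\mathcal{F}}$ is clean. Part (ii) is handled correctly as well: the section identities give the mutual-inverse relation, and the compact--Hausdorff upgrade of a continuous bijection to a homeomorphism finishes it.

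As for comparison with the paper: there is nothing to compare. The paper does not prove this theorem at all; its ``proof'' is a bare citation, ``See \cite{bhi11:2011:TFI}, Theorem 3.4.'' You have supplied a complete self-contained argument where the paper defers entirely to an external reference, so in that sense your proposal goes well beyond what the paper offers.
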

\begin{proof}
See \cite{bhi11:2011:TFI}, Theorem 3.4.
\end{proof}

This provides an up-to-date definition of fractal transformations. From a computational perspective such a general viewpoint is not necessary, but this general setting does include the earlier work involving contraction mappings on compact metric spaces which is less abstract to compute. If, at this point we tried to construct some examples of these transformations we would soon run into a problem. We got around the issue of having multiple addresses for points on the attractor by introducing the address space, but given an IFS $\mathcal{F}$ we gave no notion of how one may construct an address space. In \cite{bar09:2009:TSRS} the notion of a tops code space was introduced. The idea is do impose a lexicographic ordering on $I^\infty$ and construct your address space by looking at each point on the attractor and picking the largest of its addresses with respect to this ordering. This is somewhat limited in the possible address spaces and sections it can produce. The idea of masks gives us a much more general way of constructing address spaces.

\subsection{Masks}

\begin{definition} Let $\mathcal{F}$ be a point-fibred IFS on the compact Hausdorff space. Let $A$ be the attractor of $\mathcal{F}$. A finite sequence of sets $\mathcal{M}:=\{M_i \subset A\mid i\in I\}$ is called a mask for $\mathcal{F}$ if 
\begin{itemize}
\item[(i)] $M_i \subseteq f_i (A)$, for each $i\in I$;
\item[(ii)] $M_i \cap M_j =\emptyset$, for all $i,j\in I$ with $i\neq j$;
\item[(iii)] $\cup_{i\in I}{M_i}=A$.
\end{itemize}
\end{definition}

A mask is essentially a partition of the attractor of an IFS. In particular it means that for any $x\in A$ there exists a unique $i\in I$ such that $x\in M_i \subseteq f_i(A)$. This property enables us to define a dynamical system on the attractor which is associated with the mask.

\begin{definition} Let $\{M_i :i\in I\}$ be a mask for an injective point-fibred IFS $\mathcal{F}$ with attractor $A$. The associated masked dynamical system is defined as
\[T:a\rightarrow A, x\mapsto\left\{ \begin{array}{l} f_{1}^{-1}(x), \; x\in M_1, \\ f_{2}^{-1}(x), \; x\in M_2, \\
 \;\; \vdots \\ f_{N}^{-1}(x), \; x\in M_N \end{array} \right. \]
\end{definition}

This dynamical system can be thought of as an address generating dynamical system. The idea behind this is that if you follow the orbit of each $x\in A$ under this dynamical system then you can generate a unique address simply by recording which mask the orbit lands in after each iteration. This is made more precise in the following theorem.

\begin{theorem} Let $\mathcal{F}$ be an injective point-fibred IFS with attractor $A$. Let $\mathcal{M}:=\{M_i : i\in I\}$ be a mask for $\mathcal{F}$ with the associated dynamical system $T:A\rightarrow A$. Let $x\in A$ and $\{x_n\}_{n=0}^{\infty}$ be the orbit of $x$ unser $T$; that is, $x_0 =x$ and $x_n =T^n (x_0 )$ for $n=1,2,\dots$. Let $\sigma_k (x)\in I$ be the unique symbol such that $x_{k-1} \in M_{\sigma_k}$, for $k=1,2,\dots$. Then
\[\Omega_\mathcal{M} =\{\sigma\in I^\infty \mid\sigma:=\sigma_1 (x)\sigma_2 (x)\sigma_3 (x)\dots\in I^\infty ,x\in A\}\]
is an address space for $\mathcal{F}$.
\end{theorem}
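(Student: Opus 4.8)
The plan is to show two things: first, that the map $x \mapsto \sigma(x) := \sigma_1(x)\sigma_2(x)\sigma_3(x)\cdots$ sends $A$ into $I^\infty$ in such a way that $\pi_{\mathcal F}(\sigma(x)) = x$; and second, that $x \mapsto \sigma(x)$ is injective. Together these give that $\pi_{\mathcal F}$ restricted to $\Omega_{\mathcal M} = \sigma(A)$ is onto $A$ (since $\pi_{\mathcal F}(\sigma(x)) = x$ for every $x$) and one-to-one (if $\pi_{\mathcal F}(\sigma(x)) = \pi_{\mathcal F}(\sigma(x'))$ then $x = x'$, hence $\sigma(x) = \sigma(x')$), which is precisely the definition of an address space.

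First I would establish the key semiconjugacy-type identity. By the defining property of a mask, every $x \in A$ lies in exactly one $M_i$, so $T$ and the symbol sequence $\sigma_k(x)$ are well-defined. Observe that if $x \in M_{\sigma_1(x)} \subseteq f_{\sigma_1(x)}(A)$, then by injectivity $f_{\sigma_1(x)}^{-1}(x)$ is a well-defined point of $A$ and $x = f_{\sigma_1(x)}(T(x))$. Iterating, one gets $x = f_{\sigma_1(x)} \circ f_{\sigma_2(x)} \circ \cdots \circ f_{\sigma_k(x)}(x_k) = f_{\sigma(x)\mid k}(x_k)$ for every $k$, where $x_k = T^k(x) \in A$. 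Now apply Theorem~1.3(vi) (or (iv)): since $\{x_k\} \subseteq A \in K(X)$, we have $\{\pi_{\mathcal F}(\sigma(x))\} = \lim_{k\to\infty} f_{\sigma(x)\mid k}(A) \ni \lim_{k\to\infty} f_{\sigma(x)\mid k}(x_k) = x$ (the sets $f_{\sigma(x)\mid k}(A)$ are nested compacts shrinking to the singleton $\{\pi_{\mathcal F}(\sigma(x))\}$, and each contains $x$), so $\pi_{\mathcal F}(\sigma(x)) = x$. This simultaneously shows $\sigma(x)$ is a genuine address of $x$ and that $\pi_{\mathcal F}(\Omega_{\mathcal M}) = A$.

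For injectivity of $x \mapsto \sigma(x)$: suppose $\sigma(x) = \sigma(x')$. Then by the identity just proved, $x = \pi_{\mathcal F}(\sigma(x)) = \pi_{\mathcal F}(\sigma(x')) = x'$. So $\pi_{\mathcal F}\mid_{\Omega_{\mathcal M}}$ is automatically injective: distinct points of $\Omega_{\mathcal M}$ are of the form $\sigma(x), \sigma(x')$ with $x \ne x'$, and $\pi_{\mathcal F}$ sends them back to $x \ne x'$. Thus $\pi_{\mathcal F}\mid_{\Omega_{\mathcal M}}$ is a bijection onto $A$, and $\Omega_{\mathcal M}$ is an address space with section $\tau_{\mathcal M}(x) = \sigma(x)$.

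The main obstacle is the correct handling of the limit in the first step — one must be careful that the points $x_k$ at which we evaluate $f_{\sigma(x)\mid k}$ change with $k$, so a naive continuity argument does not immediately apply. The clean way around this is to use the nestedness: each $f_{\sigma(x)\mid k}(A)$ contains $x$ (since $x = f_{\sigma(x)\mid k}(x_k)$ with $x_k \in A$), these sets decrease, and their intersection is the singleton $\Pi(\sigma(x)) = \{\pi_{\mathcal F}(\sigma(x))\}$ by the point-fibred hypothesis; hence $x$ equals that unique point. This avoids any appeal to a metric or to uniform contraction and works in the general compact Hausdorff setting. A minor point worth a sentence is that injectivity of $\mathcal F$ is used both to make $T$ well-defined and to guarantee $T(x) \in A$ at each step, so that the orbit stays in $A$ and Theorem~1.3(vi) applies.
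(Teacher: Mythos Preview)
Your argument is correct: the key point is that $x \in f_{\sigma(x)\mid k}(A)$ for every $k$, and the point-fibred hypothesis forces $\bigcap_k f_{\sigma(x)\mid k}(A) \subseteq \bigcap_k f_{\sigma(x)\mid k}(X) = \{\pi_{\mathcal F}(\sigma(x))\}$, whence $\pi_{\mathcal F}(\sigma(x))=x$; injectivity of $\pi_{\mathcal F}\mid_{\Omega_{\mathcal M}}$ then follows immediately because $\sigma$ is a function of $x$. The paper itself gives no argument here --- it simply cites Theorem~4.3 of \cite{bhi11:2011:TFI} --- so there is nothing to compare against beyond noting that your self-contained proof is the natural one and almost certainly coincides with the cited source.
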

\begin{proof}
See \cite{bhi11:2011:TFI}, Theorem 4.3.
\end{proof}

\begin{definition}
Let $\mathcal{F}$ be an injective point-fibred IFS with an address space $\Omega_\mathcal{M}\subset I^\infty$ as provided in the previous theorem. $\Omega_\mathcal{M}$ is calles a masked address space for $\mathcal{F}$ and the corresponding section of $\pi$, say $\tau:A\rightarrow\Omega_\mathcal{M}$, is called a masked section of $\pi$.
\end{definition}

At this point we note that the tops address space can be generated by masks. If we were choose $M_1=f_1 (A)$ and $M_i=f_i (A)\backslash{\cup_{k=1}^{i-1}{f_k (A)}}$ for all $i\in I\backslash\{1\}$ then the corresponding masked address space would be the tops address space. Masked address spaces and masked sections have all of the properties of address spaces and sections previously given. They also have some additional properties which we include in the following theorem.
 
\begin{theorem} If $\mathcal{F}=(X;f_1,\dots,f_N)$ is an injective point-fibred IFS on the compact with attractor $A$, code space $I^\infty$, coding map $\pi:I^\infty \rightarrow A$, mask $\mathcal{M}:=\{M_i : i\in I\}$, masked address space $\Omega_\mathcal{M}$ and masked section $\tau:A\rightarrow\Omega_\mathcal{M}$, then
\begin{itemize}
\item[(i)] if $\mathcal{F}$ is open then $\tau:A\rightarrow\Omega_\mathcal{M}$ is continuous at $x\in A$ iff $T^{k-1}(x)\in \text{Int}_A (M_{\tau(x)_k})$ for all $k=1,2,3,dots$;
\item[(ii)] the shift map $S:\Omega_\mathcal{M} \rightarrow \Omega_\mathcal{M},\sigma_1 \sigma_2 \sigma_3 \dots\mapsto\sigma_2 \sigma_3 \sigma_4 \dots$ is well-defined, with $S(\Omega_\mathcal{M})\subset\Omega_\mathcal{M}$;
\item[(iii)] the following diagram commutes
(need to add commutative diagram)
\item[(iv)] if there is $i\in I$ such that $M_i=f_i(A$ then $S(\Omega_\mathcal{M})=\Omega_\mathcal{M}$.
\end{itemize}
\end{theorem}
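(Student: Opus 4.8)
The plan is to dispose of (ii)--(iv) first, since they all rest on one semiconjugacy identity, and then concentrate on the continuity criterion (i). First I would record that $T\colon A\to A$ is genuinely well defined: the mask partitions $A$, each $M_i\subseteq f_i(A)$, and $f_i|_A\colon A\to f_i(A)$ is a continuous bijection from a compact space onto a Hausdorff space, hence a homeomorphism, so each branch $f_i^{-1}$ is continuous on $f_i(A)$. For (ii) and (iii) the key point is that the symbol-recording rule intertwines $T$ with the shift: for any $x\in A$ and $k\ge 1$, the symbol $\sigma_k(Tx)$ is by definition the unique $i\in I$ with $T^{k-1}(Tx)=T^{k}(x)\in M_i$, which is exactly the defining property of $\sigma_{k+1}(x)$; hence $\sigma_k(Tx)=\sigma_{k+1}(x)$ for all $k$, i.e. $\tau(Tx)=S(\tau(x))$, which is the commuting square relating $T$, $S$ and $\tau$ asserted in (iii). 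Since $\Omega_\mathcal{M}=\tau(A)$ and $Tx\in A$, this gives $S(\sigma)=\tau(Tx)\in\Omega_\mathcal{M}$ for every $\sigma=\tau(x)\in\Omega_\mathcal{M}$, which is (ii). For (iv), combining $S\circ\tau=\tau\circ T$ with the bijectivity of $\tau$ onto $\Omega_\mathcal{M}$ gives $S(\Omega_\mathcal{M})=\tau(T(A))$, so $S(\Omega_\mathcal{M})=\Omega_\mathcal{M}$ exactly when $T(A)=A$; and since $T(A)=\bigcup_{j\in I}f_j^{-1}(M_j)$, the hypothesis $M_i=f_i(A)$ forces $f_i^{-1}(M_i)=f_i^{-1}(f_i(A))=A$ by injectivity, whence $T(A)=A$.

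Turning to (i), for the \emph{if} direction assume $\sigma:=\tau(x)$ satisfies $T^{k-1}(x)\in\mathrm{Int}_A(M_{\sigma_k})$ for every $k$. Because a basic neighbourhood of $\sigma$ in $I^\infty$ constrains only finitely many coordinates, it suffices to build, for each $m$, an open neighbourhood $U_m$ of $x$ in $A$ on which the first $m$ symbols of $\tau$ are constantly $\sigma_1\cdots\sigma_m$. I would do this inductively, taking $U_1=\mathrm{Int}_A(M_{\sigma_1})$ and $U_m=U_{m-1}\cap(T^{m-1}|_{U_{m-1}})^{-1}(\mathrm{Int}_A(M_{\sigma_m}))$, carrying the inductive statement that $U_{m-1}$ is open, contains $x$, and that $T^{m-1}$ agrees on $U_{m-1}$ with $f_{\sigma_{m-1}}^{-1}\circ\cdots\circ f_{\sigma_1}^{-1}$ and is therefore continuous there; then $U_m$ is open, contains $x$ by the interior hypothesis, and on $U_m$ the orbit stays inside the prescribed masks so the $m$-th symbol and the next branch of $T^m$ behave as required. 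This is where $\mathcal{F}$ being injective and open enters, ensuring the branches $f_{\sigma_j}^{-1}$ are continuous (indeed homeomorphic) on a set containing $M_{\sigma_j}$.

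For the \emph{only if} direction, suppose $\tau$ is continuous at $x$ and let $k$ be the least index with $T^{k-1}(x)\notin\mathrm{Int}_A(M_{\sigma_k})$. By minimality the construction above still yields an open neighbourhood $U_{k-1}$ of $x$ on which $T^{k-1}$ is a composition of the homeomorphic branches $f_{\sigma_j}^{-1}$, so $T^{k-1}(U_{k-1})$ is open in $A$ and contains $x_{k-1}:=T^{k-1}(x)$; since $x_{k-1}\notin\mathrm{Int}_A(M_{\sigma_k})$, this open set is not contained in $M_{\sigma_k}$. Indexing by the neighbourhood filter of $x$ ordered by reverse inclusion, for each neighbourhood $V$ of $x$ I can then choose $y_V\in V\cap U_{k-1}$ with $T^{k-1}(y_V)\notin M_{\sigma_k}$; the net $(y_V)$ converges to $x$, yet $\tau(y_V)_k\neq\sigma_k$ for every $V$, so $\tau(y_V)\not\to\tau(x)$ in the product topology, contradicting continuity at $x$. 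The hard part is exactly this half: one must check that the relevant iterate of $T$ is an open map onto a neighbourhood of $x_{k-1}$, and — since $A$ is only assumed compact Hausdorff, not metrizable — run the contradiction with nets indexed by the neighbourhood filter rather than with sequences; the accompanying bookkeeping that keeps the sets $U_m$ open and $\tau$ locally constant on them is routine but has to be tracked carefully.
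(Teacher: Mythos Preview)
The paper does not prove this theorem; its entire proof reads ``See \cite{bhi11:2011:TFI}, Theorem 4.5.'' So there is no in-paper argument to compare yours against, only an external citation.

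That said, your outline is sound and is essentially the standard route to the cited result. The semiconjugacy $\tau\circ T=S\circ\tau$ is exactly the right engine for (ii)--(iv), and your derivation of it from the definition of the symbol sequence is correct; reducing (iv) to $T(A)=A$ via bijectivity of $\tau$ and then observing $f_i^{-1}(M_i)=A$ when $M_i=f_i(A)$ is the expected move. For (i), your inductive construction of the open sets $U_m$ on which $T^m$ agrees with the composite branch $f_{\sigma_m}^{-1}\circ\cdots\circ f_{\sigma_1}^{-1}$ is the natural proof, and you are right to use nets rather than sequences in the compact-Hausdorff setting. One small remark: you locate the use of the hypothesis ``$\mathcal{F}$ open'' at the end of the \emph{if} direction, but that direction only needs continuity of the branches $f_{\sigma_j}^{-1}$, which already follows from injectivity plus compactness (each $f_j|_A\colon A\to f_j(A)$ is a continuous bijection of compact Hausdorff spaces, hence a homeomorphism). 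The place where one might expect to invoke openness is the \emph{only if} direction, to guarantee that $T^{k-1}(V\cap U_{k-1})$ is a genuine $A$-neighbourhood of $x_{k-1}$; in fact your argument shows that the restricted branches are homeomorphisms onto $A$, so images of relatively open sets are already open in $A$, and the argument goes through. It would be worth flagging explicitly in your write-up where (or whether) openness is truly needed, since the theorem statement carries that hypothesis.
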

\begin{proof}
See \cite{bhi11:2011:TFI}, Theorem 4.5.
\end{proof}

One last property of masks worth mentioning relates to the variety of address spaces it is possible to generate using masks. Theorem 4.7 of \cite{bhi11:2011:TFI} tells us that if two different masks on a point-fibred IFS $\mathcal{F}$ differ by more than a set of measure 0 (with respect to some measure $\mu$ on $A$) then the two masked sections are different for $\mu$-almost all $x\in A$. Hence we can see that masks give us a much more general way of generating address spaces then the tops method and hence can lead to a much greater variety of fractal transformations.

Masks are also a great tool from a computational perspective as it is easy to apply the masked dynamical system in a finite loop to approximate a unique address for data elements we may wish to transform.

\section{ALGORITHMS}

We will descibe algorithms in the context of transformaing images. With this in mind it is simplest to consider hyperbolic IFS's on the unit square, i.e. $[0,1]^2 \subset \mathbb{R} ^2$. In this space the most natural metric to work with is the Euclidean metric and there has been lot of theory in the literature which works with IFS's on compact metric spaces \cite{bar09:2009:TSRS,abvw10:2010:TP,bar93:1993:FE,hut81:1981:FSS}. It is simplest to consider transforming a whole image which we embed onto the unit square. Of course it is possible to apply fractal transformations on much more complicated fractal sets which could be done with modifications of the algorithms described here. 

Defining IFS's such that the attractors are the unit square can be made a simple task by making use of the collage theorom \cite{bar93:1993:FE}. This theorem tells us that you can approximate the desired attractor by defining contractive mappings that cover the desired attractor with smaller copies of itself. The difficult part in most cases is defining the IFS's and masks appropriately such that the transformation is continuous. We can do this relatively easily with affine transformations by ensuring that their ranges overlap only on their boundaries or, more precisely, on a set of measure 0 with respect to two-dimensional Lebesgue measure. For example take the IFS's 
\begin{equation} \mathcal{F}= \left\{ [0,1]^2:\begin{array}{l} f_{1}(x,y)=(x/2,y/2), \\ f_{2}(x,y)=((x+1)/2,y/2),  \\
  f_{3}(x,y)=((x+1)/2,(y+1)/2), \\ f_{4}(x,y)=(x/2,(y+1)/2) \end{array} \right\}  \end{equation}
\begin{equation}  \mathcal{G}= \left\{ [0,1]^2: \begin{array}{l} g_{1}(x,y)=(ax,by), \\ g_{2}(x,y)=(a+(1-a)x,by),   \\
  g_{3}(x,y)=(a+(1-a)x,b+(1-b)y), \\ g_{4}(x,y)=(ax,b+(1-b)y) \end{array}  \right\} \end{equation}
where $a,b\in(0,1)$. As you can see the ranges of the functions on each IFS only overlap on boundaries. Also since the basic structure of the functions in the two IFS's is the same (by which we mean the similar relative position of the ranges of the functions and the same orientation of the function pairs) then it can be easily shown that the two IFS's have the same tops code space and hence it is possible to define a fractal transformation between these IFS's which will be a homeomorphism.

There are many algorithms for producing images of these transformations but first we will make some general notes on the implementation. The first comment is that since we are in digital space we need only transform a finite number of points on the attractor. Depending on the properties of the fractal transformation we intend to apply and given the chaotic nature of the underlying dynamics we may want to think carefully about the exact way in which we choose a point for a given data element. In general, if we are applying a continuous fractal transformation that is relatively subtle then this isn't too much of a concern. The second remark regards the fact that points on the attractor, and hence each pixel, correspond to addresses which are infinitely long entities in the address space. Since we cannot compute infinite codes we must determine if a large finite code will suffice and how large the finite code should be to have a reasonable error in the calculation. Obviously a smaller code would require less memory and the run time of the algorithm will be reduced but the length must be long enough such that the transformation is applied accurately. Hence we must try and balance the two possibly making a compromise on one or the other depending on run time constraints. The length of the code which results in a visually accurate transformation relies mostly on three factors, the size of the dataset to be transformed, the rate of convergence of the functions in one IFS and how difficult it is to describe the inverse functions in the other IFS. For a visually accurate transformation we may ask that the addresses are long enough such that every address of that length corresponds to a sequence of functions that converges to a point with an accumulated error of no more than the width of a data element (or distance between two data elements). One way to determine this is to look at the contraction factors of the functions in each IFS. The length of the address needs to be large enough such that iterating the function with the largest contraction factor for that many iterations gives a result accurate to within half a pixel width. I.e. If $M$ is the length of the code, $c$ is the largest contraction factor and $\epsilon$ is the distance between data elements, then it is required that $M$ is the smallest integer such that
\[ \frac{c^{M+1}}{1-c} < \frac{\epsilon}{2} . \]
The idea behind this inequality is that tail of the geometric series $\sum_{i=0}^{\infty}{c^i}$ is smaller than the maximum desired error. The convergence within half of $\epsilon$ is to ensure the result is more accurate when rounding off to the nearest data element on either side but it could be relaxed a little to be within $\epsilon$ without affecting the output by a noticeable amount. Another error to consider is the error that can accumulate when applying the masked dynamical system to generate codes. An 8 byte double has 15 digits so we start with a rounding error of $\delta=10^{-15}$. Since the masked dynamical system is an expanding system this error will rapidly expand as we continue to iterate. The rate it increases is dependant upon the largest exanding factor of the functions in the expanding system (i.e. invert the smallest contraction factor of the IFS). If we call this expanding factor $d$ and we assume on each iteration there is an expansion of this factor plus an additional rounding error then the error accumulates by $\delta\mapsto d*\delta+10^{-15}$. For example if $d=4$ then after $20$ iterations the error accumulates to almost $1.47*10^-3$ at which point we begin to become less confident that we are following the correct orbit.

Rather than determine address length before hand it is also possible that the determination of appropriate code length can be done dynamically. For example, suppose most of the maps have a much smaller contraction factor than the largest, then the code length will not need be as long for most of the pixels to produce an accurate result. In this setting appropriate code lengths are estimated as each symbol in the code is calculated and in that way no more calculations are done than necessary.

Now we will describe some algorithms and discuss some of their advantages and disadvantages.

\subsection{Chaos Game Algorithm}
One of the quickest algorithms to develop a rough image of the resulting transformation applied to an image is by playing the chaos game. This was described by Barnsley and Demko \cite{bardem85:1985:IFS} and is the most common method for plotting attractors of IFS's.

The chaos game consists of randomly iterating the functions in an IFS and plotting the result at each step. As the functions are randomly iterated the resulting points will jump to different locations on the attractor of the IFS in a seemingly chaotic manner. It was shown by Barnsley and Vince that given enough time the chaos game will eventually hit every point on the attractor \cite{barvin10:2010:CG}. It also has the advantage of working in quite general circumstances where there may not be a contractive metric. Where two IFS's are involved for the purposes of plotting a fractal transformation, the forward iterations of the two IFS's are performed simultaneously according to the pairing of functions between the IFS's with the same index. Upon each iteration, the colour from the point landed on by one of the IFS's is plotted onto the point given by the other IFS. This is the basic notion of 'colour stealing'.

Below is a basic outline of implementation for colour stealing with the chaos game in C: \\
\#include$\langle$random$\rangle$ //this defines the pseudo random number generator rand() \\
int i; //our loop variable for counting iterations \\
int itmax = \textit{enter number here}; // the maximum number of iterations \\
double x1, y1; //colour stealing coordinates \\
double x2, y2; //mapping coordinates \\
for(i,i$<$itmax,i++) \\
\{ \\
ran $=$ rand()\%$N$; \\
if(ran$==0$) \{\textit{apply $f_1$ to (x1,y1), apply $g_1$ to (x2,y2) (setting (x1,y1) and (x2,y2) to be the new results respectively)}\}; \\
\vdots \\
if(ran$==N-1$) \{\textit{apply $f_N$ to (x1,y1), apply $g_N$ to (x2,y2)}\}; \\ 
//now take colour from the point (x1,y1) and apply to the point (x2,y2); \\
\}

In practice the use of the pseudo random number generator here is not ideal and should be replaced with something more reliable. For some values of $N$, rand()\%$N$ has a period which is insufficient for the chaos game to fill an entire image. In the example above each random number is chosen with equal probability but in most circumstances it is more efficient if the probability of choosing a particular pair of functions is proportional to the area given by the range of the function acting on the target image (e.g. functions from the IFS $\mathcal{G}$ in the example above).

The chaos game has one notable advantage, it is fast, at least initially. It is quite a simple algorithm and as a result each iteration takes very little time so it is possible to complete millions of iterations every second. Of course, this does depend on the number of functions in each IFS and how easy or difficult those functions may be to calculate at each iteration, but these factors will affect any algorithm for performing a fractal transformation to an image. The chaos game can therefore fill a good portion of the target pixels quite quickly. However, as an image begins to fill we see the main disadvantage of the chaos game. Because it randomly hits points on the attractor it takes an undetermined number of iterarions to complete an image. Once most of an image has been completed the chaos game will continue to iterate over pixels already filled and there is no way of knowing when it may hit the remaining incomplete pixels. A solution to this problem may be to run the chaos game through a DeBruijn sequence [reference] of sufficient length rather than use a pseudo random number generator to ensure that every pixel is hit. The best way to do this would be to have library of polynomials which generate various length DeBruijn sequences for different bases and then choose an appropriate one based on the required address length. One problem with this is that if one or more of the functions has a large contraction ratio then the length of the required DeBruijn sequence could be extremely large making this a very inefficient method. Another solution is to combine the chaos game with other algorithms, that is run the chaos game for a short time and then use another algorithm to fill the remaining pixels. Such algorithms will be discussed soon. 

Another advantage of the chaos game algorithm is that there is less need to worry about address lengths to minimise errors. This really only comes into play during initial iterations that we don't plot whilst the chaos game converges upon the attractor. Any other numerical errors that occur are being continuously reduced due to the application of contraction mappings. The chaos game also has the advantage of requiring minimal memory to run, the bulk of this consists of one buffer to store the image to be transformed and a second buffer to plot the transformed image onto. Another disadvantage of the chaos game is that it cannot easily produce the image of a fractal transformation where masks are being used. A modification of the chaos game can be made to accurately plot transformations of masked IFS's but it is more efficient to use alternative algorithms in these cases. The modification requires keeping track of how close the chaos game orbit is to the reverse of an orbit arising from the masked dynamical system and only plotting points when this distance is sufficiently small. As a result you may hit a given data element many times before satisfying this condition which can draw the algorithm out for quite some time.

\subsection{Perpixel algorithm}
The perpixel method is applied as follows. For each pixel we find an address corresponding to that pixel by reverse iterating the correspinding point $x$ by successively applying the appropriate inverse of functions from the IFS $\mathcal{G}$. If the IFS has a mask then this is equivalent to running the masked dynamical system $T$ for the given point. This is done for finitely many iterations corresponding to the desired address length. In the case of finding the tops address for each point it is also possible do use the following method:

First apply $g_{1}^{-1}$ to the pixel co-ordinates. If the result lies in $[0,1]^2$ then we keep the result and assign $1$ as the first code in the sequence. If not then we try $g_{2}^{-1}$ and so on. At least one of the $N$ inverse functions must work assuming the attractor of the IFS is $[0,1]^2$. We then take the resulting point from this procedure and repeat the process to find out the next code in the sequence and continue to repeat until an appropriate address length, $k$, has been reached (where the determination of this length has been previously discussed).  

Once the address of this point has been determined ($\sigma\mid_k =\tau_\mathcal{G} (x)$) it can then be applied to the functions in $\mathcal{F}$ for some initial point on $A_\mathcal{F}$, that is we apply coding map, $\pi_\mathcal{F}$, to the finite address sequence. The resulting point $y=\pi_\mathcal{F} (\sigma\mid_k)$ corresponds to a pixel on the picture from which we take the colours and apply to our starting pixel on the target image. After performing this calculation for every pixel we will clearly have a complete image of the transformation.

This method is slower than the chaos game to begin with since there are many more calculations required for each pixel. Where the chaos game only needs to apply one function from each IFS in order to plot a point, this algorithm must apply a function from each IFS a number of times corresponding to the desired address length $k$. However as an image completes and the chaos game tends to repeatedly go over points it has already filled the perpixel algorithm will continue to steadily fill pixels and as a result the perpixel method can be quicker to complete an image. It is also more reliabe in terms of the runtime being consistent each time the algorithm is run. Speed of the algorithm can also be improved by dynamically determining the appropriate address length. The perpixel method also has the advantage that it works very well with masked IFS's. It also only slightly more memory than the chaos game to run, this would correspond to a small array for keeping track of finite addresses.

Below we have a basic outline of the calculation for a single pixel being transformed using masked IFS's: \\
double x1, y1; //these are the colour stealing co-ordinates \\
double x2, y2; //these are the mapping coordinates \\
x2 = \textit{x coordinate of target pixel}; \\
y2 = \textit{y coordinate of target pixel}; \\
int m; \\
int codelength = \textit{the desired length of addresses}; \\
int code[codelength]; // to record addresses \\
for (m$=0$,m$<$codelength,m$++$) // this loop runs the masked dybamical system\\
\{ \\
if (\textit{(x2,y2) is in $M_1$}) \{\textit{apply } $g_{1}^{-1}$ \textit{ to (x2,y2) (setting (x2,y2) to be the new result)}; code[m]$=0$\}; \\
\vdots \\
if (\textit{(x2,y2) is in $M_N$}) \{\textit{apply } $g_{N}^{-1}$ \textit{ to (x2,y2)}; code[m]$=N-1$\}; \\
\} \\
x1 = 0; \\
y1 = 0; \\
for ((m$=codelength-1$,m$>-1$,m$--$)) // this loop applies the coding map\\
\{ \\
if (code[m]$==0$) \{\textit{apply } $f_{1}$ \textit{ to (x1,y1) (setting (x1,y1) to be the new result)}\}; \\
\vdots \\
if (code[m]$==N-1$) \{\textit{apply } $f_{N}$ \textit{ to (x1,y1)}\}; \\
\} \\
//find the pixel on the image corresponding to (x1,y1), \\
//take the colour data and apply it to the target pixel coordinates \\

\subsection{Other algorithms}
Here we outline some other applications that may be used.

\subsubsection{Pixel Chaining}

Pixel chaining is another algorithm which is somewhat similar to the perpixel method. The basic idea of reverse iterating on one IFS and then forward iterating on the other is the same except for this algorithm we do the pixels in chains rather than one at a time. This algorithm is described by \cite{lu97:1997:FI}. It can be implemented as a modification of the perpixel method described above. For masked IFS's it involves applying the masked dynamical system on an image (for example) until you hit a pixel that you have landed on previously in the orbit, this forms a chain on $A_\mathcal{G}$. At this point we continue to iterate for some finitely many more steps $k$. This gives us one long address which we can iterate forwards on the second IFS $\mathcal{F}$. After the first $k$ iterations we record where we land so that the points in this chain in $\mathcal{F}$ can be mapped to the chain in $A_\mathcal{G}$. The idea is that we take some starting pixel in the target image and then reverse iterate the functions from $\mathcal{G}$. After this chain is completed we pick another point in $\mathcal{G}$ that hasn't been hit and form another chain. Note that the subsequenct chains may intersect previous chains at which point we iterate a further $k$ times before applying the address on the IFS $\mathcal{F}$. 

The pixel chaining algorithm fills in most of the image relatively quickly to start with but slows down towards the end as chains become smaller. At it's slowest it fills pixels no slower than the perpixel method. Hance the pixel chaining method has the advantage of being quicker than the perpixel method. However it is also a little more difficult to implement and requires more memory to run (the extra memory is the result of an extra buffer to store long addresses corresponding to the chains).

\subsubsection{Combinations of algorithms}

One of the fastest ways of completing the fractal transformation of an image consists of combining the chaos game and the perpixel algorithms. We start with the chaos game and as the image begins to fill out the chaos game will start to continually hit pixels that have already been calculated. At this point the perpixel algorithm is implemented to fill in remaining pixels. The complicated part of combining the two algorithms is determining the optimal moment to switch between the two. Optimally we want to switch to the perpixel algorithm as soon as the chaos game starts filling pixels at at slower rate than the perpixel method would. The easiest way is to implement the switch is to simply run the chaos game for a set number of iterations. Experimentally a good number of iterations in many circumstances seems to be $N$ times the total number of pixels in the target image. Alternatively this could be done dynamically. By monitoring how many times the chaos game hits pixels that have been filled before hitting new ones we can program the shift in algorithm to occur when this number is such that the perpixel algorithm is faster. Whilst giving us something closer to the optimum shift the continuous monitoring may slow the chaos game part slightly. 

The pixel chaining algorithm can also be combined with the chaos game but generally it is slightly quicker when the perpixel method is used in this scenario. This is because if the chaos game fills some initial pixels it prevents the pixel chaining algorithm to build the long chains that make it effective. The extra overhead involved in combining the two also acts to slow the chaos game part down a little. The reason for this is that the pixels already filled by the chaos game prevent the pixel chaining algorithm from building up the long chains that make it fast. It also becomes much more difficult to determine the optimal switching point. A combination of the chaos game and perpixel method also requires less memory to run.

\subsubsection{Approximation}

The approximation algorithm is used where we have already performed a fractal transformation of an image but may want to slightly change the parameters of the underlying IFS's to produce a slightly different fractal transformation. In fact it is not necessary to already have a fractal transformation applies as you could start with the identity fractal transformation. The idea is that you simply recalculate the first element of the code corresponding to each image and output the result. By repeating this you get successive approximations of the new transformation. After applying a number of approximations corresponding to the desired code length $k$, the resulting image should become an accurate image of the new transformation, however if the change in parameters of the IFS's is only small, then much less approximations may be required for an accurate result. What is actually going on in this process is that we are moving around on a superfractal (see \cite{bar06:2006:SF}). You may think of a superfractal as a space of fractal transformations and when successively applying the approximation algorithm we are starting from our current point in this space (corresponding to the current fractal transformation) and converging upon a new point which is the fractal transformation given by the new parameters.

The algorithm is applied to masked IFS's as follows, it starts when we generate the first transformation. The first transformation can be produced using any previous algorithm but in order to be able to apply the approximation algorithm it is necessary to record the pixel coordinates on the picture from which we stole the colour for the target image for each pixel on the target image. That is, we have a buffer for the inverse fractal transformation. To apply an approximation, start with a point $x$ in $A_\mathcal{F}$ and then determine which mask $M_i$ of the new IFS this point lies in and apply the corresponding inverse function $f_{i}^{-1}(x)$ keeping a record of the appropriate code element $\sigma_0=i$. The inverse fractal transformation is then applied to the resulting point to obtain $T_{\mathcal{FG}}^{-1}(f_{\sigma_0}^{-1}(x))$. This point is then mapped by the function in $\mathcal{G}$ with index $\sigma_0$, that is we apply $g_{\sigma_0}$ to obtain the point $y=g_{\sigma_0} (T_{\mathcal{FG}}^{-1}(f_{\sigma_0}^{-1}(x)))$. The colour of the resulting pixel at $y$ in $A_\mathcal{G}$ is then applied to our starting pixel at $x$ in $A_\mathcal{F}$. By doing this for every pixel on the target image we have the first approximation. As this is being completed we also record where each of these new pixels were mapped from on the picture to get a new inverse transformation which can be used to repeat the process resulting in successive approximations, i.e. we have a buffer for the inverse fractal transformation that arises from the first approximation. The reason why this works may not be obvious at first but effectively what is occuring here is that at the first approximation we correct the first element of the address for each point, in the second approximation we correct the second address element, and etc.

The major advantage of this algorithm is that the user can very quickly see updated approximations, although some of these approximations don't always look nice to start with. A disadvantage is that by the time an accurate image of the new transformation has been generated by the approximations the same image generally could have already been generated by one of the other faster algorithms. The use of extra buffers also means this algorithm uses more memory than the others. It is possible to make it even faster by using more buffers to record the preimages and images if points in $A_\mathcal{F}$ and $A_\mathcal{G}$ respectively, but the extra buffers requires more memory.

\subsection{Parallel algorithms}

Many of these algorithms have the potential to be parallelised to reduce the time it takes to transform a given dataset. We start by looking at the the chaos game. This can done by simply running the chaos game on multiple threads making sure that the random number generator has a different seed on each thread and that the random number sequences don't overlap on any of the threads. This will fill in lots of points quickly to begin with but each thread will eventually have a lot of overlap. Another method would be to force different sequences on each thread, however this could be quite difficult requiring a lot of communication between thrads. Yet abither suggestion is to use DeBruijn sequences again, the sequence can be split into smaller sections of equal length and distributed across the threads.

The perpixel method has the most potential for parallelising. This can be done by simply dividing the dataset between the threads and letting each thread run the perpixel algorithm for the assigned data elements. There could be small issues with distributing workload here but this can be fixed by distributing the pixels dynamically, i.e. as each thread completes the calculation for a data element you give it another. As a result the  number of threads almost directly corrleates to a decrease in calculation time as $1/(\text{no.of threads})$, with some small differences made by the additional overhead.

The pixel chaining algorithm is not particularly suitable for running on multiple threads. The dataset could be divided up much like the perpixel algorithm but the data element being calculated by one thread could end up in the chain of another another thread making it a redundant calculation. With some additional overhead it would be possible have threads share information so that when one threads chain intersects a chain on another thread then one of the two threads can take over linking the two chains whilst the other thread starts again at a different point. Given the additional overhead and the complexity it generally seems that this isn't the best approach to take.

The approximation algorithm can also be easily run on multiple threads by again dividing up the pixels between the threads. The number of threads used for this algorithm then affects the run-time in a similar manner to the perpixel method.

Algorithms can still be combined when running parrallel threads. The chaos game and the perpixel method can be combined in the same way as before, but here determining the optimal moment to switch algorithms becomes a little more complicated. If done correctly this makes for an extremely fast algorithm compared to much simpler methods.

\subsection{Further Remarks}
Could include some comments about techniques when transforming between sets with non-integer Hausdorff dimension, i.e. comment on the complications that can arise with the masked dynamical system $T$ running off outside the attractor

\section{SOME APPLICATIONS IN 2D}

Here we review the applications suggested in \cite{bhi11:2011:TFI} as well as suggest some more.

\subsubsection{Transformations of images}

The implementation of this is a simple application of one of the algorithms descrived above. We have developed a few programs allows for some simple families of fractal homeomorphisms to be applied to images interactively in real-time as a user controls various nodes that manipulate the underlying IFS's. Some of the programs which have quite a few nodes demonstrate how parameters can be subtley manipulated for the purposes of image beautification. On the other hand, extreme placement of nodes can result in very sharp effects that present new artisctic textures. 

There are many other ways to apply these transformations to images. For example it is possible to make modifications to these algorithms such that only a portion of the image is transformed. Multiple could also be applied in sequence or different transformations could be applied to different parts of an image, for example one could isolate different objects in an image and perform different transformations to them. The possibilities are endless all with the common purpose of adding artistic effects to images. 

\begin{figure}
	\centering
		\includegraphics[width=0.90\textwidth]{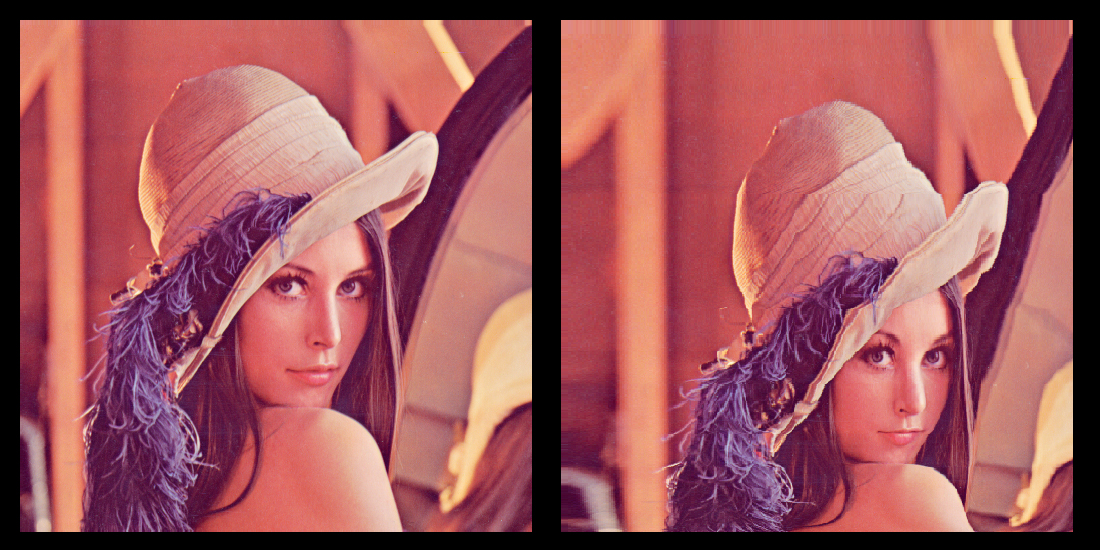}
	\caption{Lena before and after an affine fractal homeomorphism}
	\label{fig:Lena}
\end{figure}
\begin{figure}
	\centering
		\includegraphics[width=0.50\textwidth]{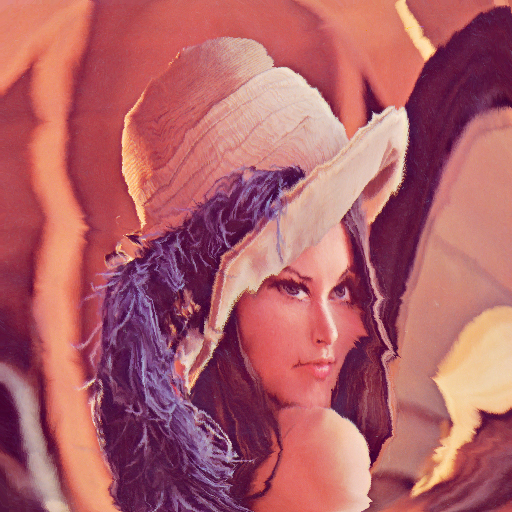}
	\caption{Lena before and after a bilinear fractal homeomorphism}
	\label{fig:Lena}
\end{figure}

\subsubsection{Video effects}

These transformations may also be used to create interesting video sequences. There are many ways in which this can be done. The first is to simply apply the transformations to individual frames of a video. By slightly changing the parameters of the transformations for each frame some nice effects can be produced. Starting with a still image it is possible to create anumations by using slightly different fractal transformations of the still to create each frame. This is able to generate complex types of motion, intricate swirling patterns and endless other effects. Even more complex animations can be generates by separating frames into layers and performing particular transformations to particular layers. 

Another way to produce interesting effects is to look at a video sequence as being a 3-dimensional object with each frame aligned with the x and y axes and the z axis being the time axis. By then constructing IFS's on the unit cube one can perform very complex fractal transformations to the video sequence data which can then be decompiled back into frames and played back. The result of such a transformation would be that points in the frames of the video sequence move around in both space and time. Provided that the transformation is continuous this can look really interesting. I will discuss more applications in 3D in the next section.

\subsubsection{Image synthesis}

Image synthesis can also be performed through the use of masked IFS's. This is described in \cite{bhi11:2011:TFI} and theoretically gives a way to synthesise two images into one and then extract them out just as they were. Of course, in practice it is not perfect but with the right parameters it can do a good job. One basic idea for this is that for two images you define a masked IFS over each such that the resulting address spaces have as little overlap as possible, that is, few codes in common. The two images are then mapped into the attractor of a third masked IFS whose address space contains the address spaces of the other two masked IFS's (or at least most of each). An easy way to choose the third IFS is simply to construct an IFS where the ranges of functions touch only on the boundaries, the tops code space of this IFS will then contain most of code space. Alternatively, by making the ranges of functions in the IFS completely disjoint then the tops code space is the entire code space, however this can be problematic since the resulting attractor is totally disconnected.

There are also methods to encode two images into one based on measure theory which also utilises fractal transformations. This will not be described here but one may instead refer to \cite{bhi11:2011:TFI}.

\subsubsection{Modelling}

Fractal transformations can also provide an alternative way of modelling rough objects. We explain in the context of an example, suppose we want to create a realistic looking image of the cross-section of a tree trunk. Taking concentric circles might be a good basic model but this is far from realistic looking in many cases. One approach may then be to just roughen each circle a little using traditional methods found in image rendering software. An alternative might be to perform a subtle fractal homeomorphism to an image of concentric circles. The result will roughen the edges in a way that can appear quite natural given a suitable choice of IFS's for the transformation. Of course there are many choices of IFS's which will do a poor job of this but with a little knowledge and experience making a good choice becomes easy and whole families of cross sections of tree trunks can be generated.

\begin{figure}[hbp]
	\centering
		\includegraphics[width=0.50\textwidth]{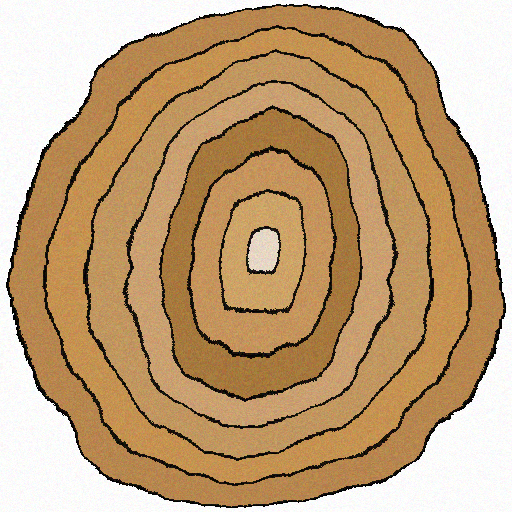}
	\caption{This tree trunk cross section was modelled using the process described above.}
	\label{fig:tree trunk cross section}
\end{figure}

The same method can also be applied to model rough 3-dimensional objects. For example we could model the terrain of a planet by starting with a sphere lying in the unit cube and then performing a fractal transformation to the unit cube in order to roughen the surface. Further, by using concentric spheres or perhaps a sphere with some internal gradient of data values, the resulting transformation will also roughen the internal structure so that you have a model of a planet and its core layers.

This demonstrates a major advantage of this method, not only can is it possible to get realistic models of the surface of rough 3-dimensional objects but it also has the potential to give nice models of the internal structure. Whilst it's possible to get very good results with this method there is a disadvantage that it can be quite difficult to get the IFS's right. It could take a lot of experimenting in some circumstances but it becomes a lot easier when you start to understand the way in which small changes in the IFS's will effect the transformation at all points.

\section{FRACTAL TRANSFORMATIONS IN 3D}

Now that we have seen many applications in 2-dimensions I will breifly describe how the algotithms can be extended to 3-dimensions. The theory previously described applies the same here as it did in 2-dimensions and the algorithms are simply extensions where functions on $\mathbb{R}^2$ are now functions in $\mathbb{R}^3$. The main difference here is that there are many more considerations to be made when dealing with 3D datasets. For simplicity we stick with affine IFS's defined within the unit cube $[0,1]^3$. One of the first things to consider when working in 3D is the type of data you wish to transform. This is an important question for many reasons. To begin with one should consider what the data is representing and how it visualised/viewed. This this may effect the suitable range of parameters of your transformation and you may need to think about how to best view the data after the transformation has been applied. Most importantly is the format of the data as this will effect the most efficient choice of algorithm. For example, there is a difference to how you may wish to handle volume data represented as voxels compared to surface mesh data represented as polygons.

Voxel data can be transformed using the perpixel algorithm, pixel chaining or by the chaos game. For affine IFS's both are able to accurately map the given data into a transformed data set in a comparable amount of time. In many ways voxel data is a lot easier to apply fractal transformations to than surface meshes. For polygon meshes it makes much more sense to apply a perpixel algorithm rather than the chaos game algorithm. This means for every vertex in the surface data you calculate the new vertex. This means that you do only one calculation per vertex and is more accurate than trying to apply the chaos game in this case as you can use the explicit vertex coordinates rather than wait for the chaos game to land within some error bound of each vertex. Another thing to keep in mind with polygon data is that fractal transformations do not necessarily preserve any planes. I.e. if you have an n-gon with n>3 in the data then it is generally assumed that this n-gon lies in a single plane. However this probably won't be the case after applying a fractal transformation. For this reason it is important that all polygons are reduced to triangles. There is a wide variety of triangulation schemes that may be used and they shall not be described here. By using triangles we ensure that the 3 vertices lie in a plane both before and after the fractal transformation. It is important to note that it may be posible for a triangle to be mapped to three points on the same line. If this were to occur then that triangle can be simply replaced with a line. Also if you have a mesh where at some sections it is described by large trangles whilst it is described by many small triangle in more detained sections then the fractal transformation can effect regions of small and large triangle in different ways. For this reason you may also want to retriangulate meshes such that they consist of triangles of roughly the same size, or within some sort of bound.

In either of these cases there is a range of 3-d visualisation software freely available online for viewing the data before and after a transformation. If you have a some programming experience it is also not too difficult to use tools like openGL for viewing such data, although viewing voxel data does take more effort than surface mesh data. As for perorming such a transformations there is currently no software readily available. However it is not too difficult to generate these transformations with high level mathematics doftware or even by writing your own program.

We provide an example consisting of 8 affine functions on the unit cube given by
\begin{equation} f_i(x,y,z)=\left( \begin{array}{l} s_1*x*t_{i,1}+((1-s_1)*x+s_1)*(1-t_{i,1}),\\
s_2*y*t_{i,2}+((1-s_2)*y+s_2)*(1-t_{i,2}),\\
s_3*z*t_{i,3}+((1-s_3)*z+s_3)*(1-t_{i,3}) \end{array} \right) \end{equation}
where for $i\in\{1,8\}$ and each $\tilde{t_i}=(t_{i,1},t_{i,2},t_{i,3})$ is a unique vector pointing at one of the 8 vertices of the unit cube $[0,1]^3$, and $\tilde{s}=(s_1,s_2,s_3)\in(0,1)^3$. The IFS given by these functions is always a just touching IFS and has the unit cube as the unique attractor. Given any two choices of $\tilde{s}\in(0,1)^3$ it is possible to construct a fractal homeomorphism between the two attractors. Applying these transformations to objects and data sets is as simple as embedding this data into the unit cube, then applying a fractal transformation to map the data embedded in this cube into another cube which is the attractor of the above IFS for some $\tilde{s}'$.
We provide some images of fractal transformations after having been applied to voxel data sets and surface meshes. Some of these transformations have been constructed using the IFS family above while others are from more complex IFS's consisting of mappings that are affine on their edges.


\begin{figure}
	\centering
		\includegraphics[width=0.60\textwidth]{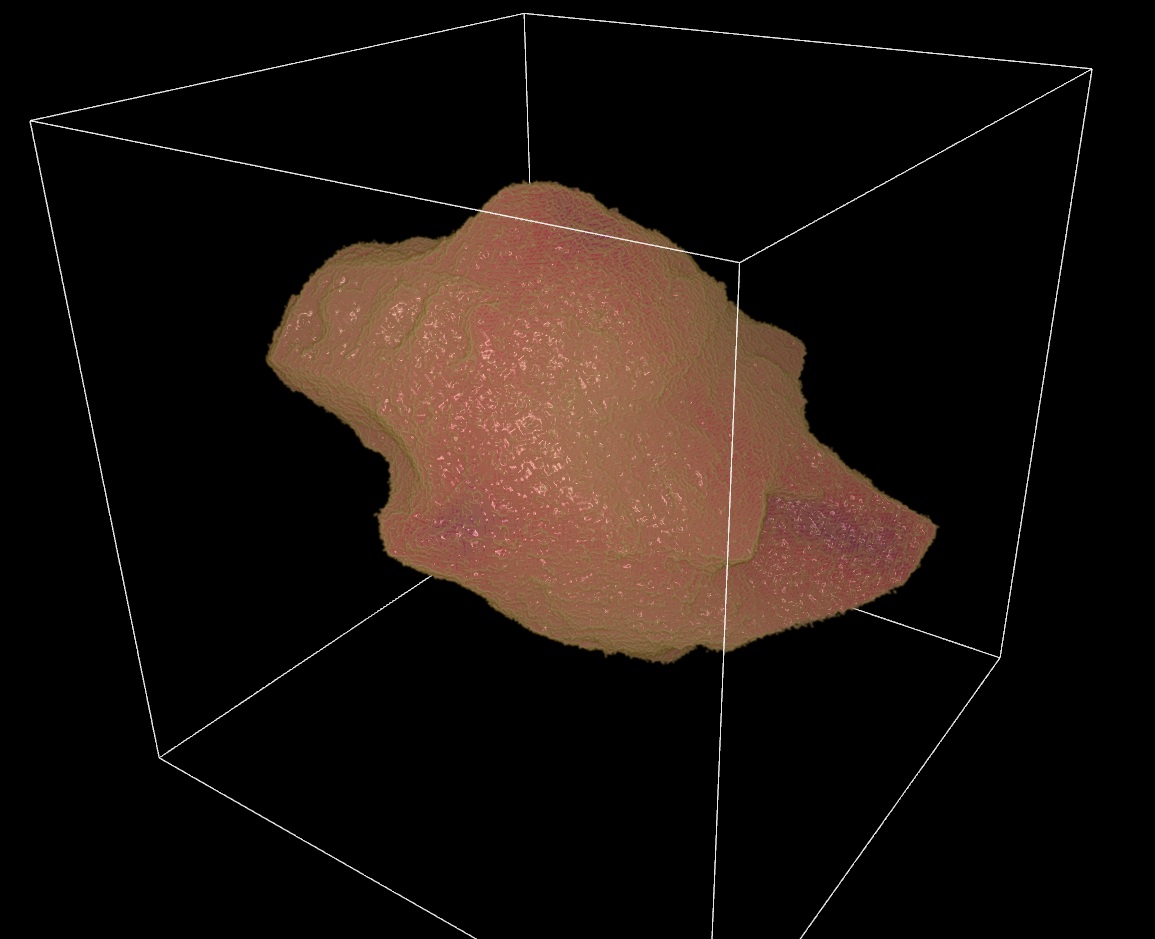}
	\caption{This image is a rendering of a solid sphere after a fractal transformation. The data here is represented as voxels and is viewed using Drishti.}
	\label{fig:homeomorphism of a sphere}
\end{figure}
\begin{figure}
	\centering
		\includegraphics[width=0.60\textwidth]{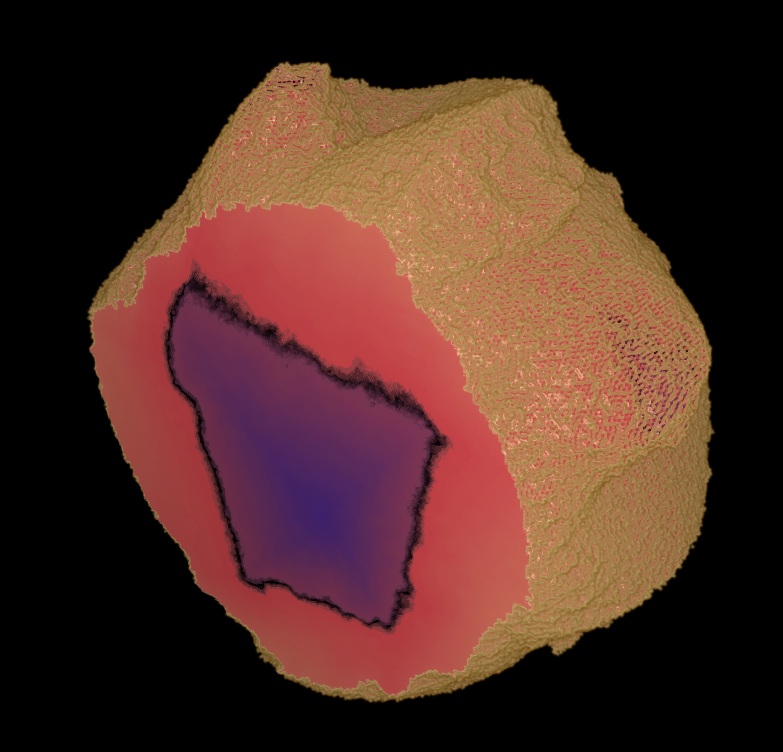}
	\caption{This is a different homeomorphism of a solid sphere. Here we also take a look at a cross section. Not the internal structure of the sphere consisted of concentric spheres, the dark quasi-circle shows a cross section of one of these layers after the transformation.}
	\label{fig:homeomorphism of a sphere}
\end{figure}
\begin{figure}
	\centering
		\includegraphics[width=0.90\textwidth]{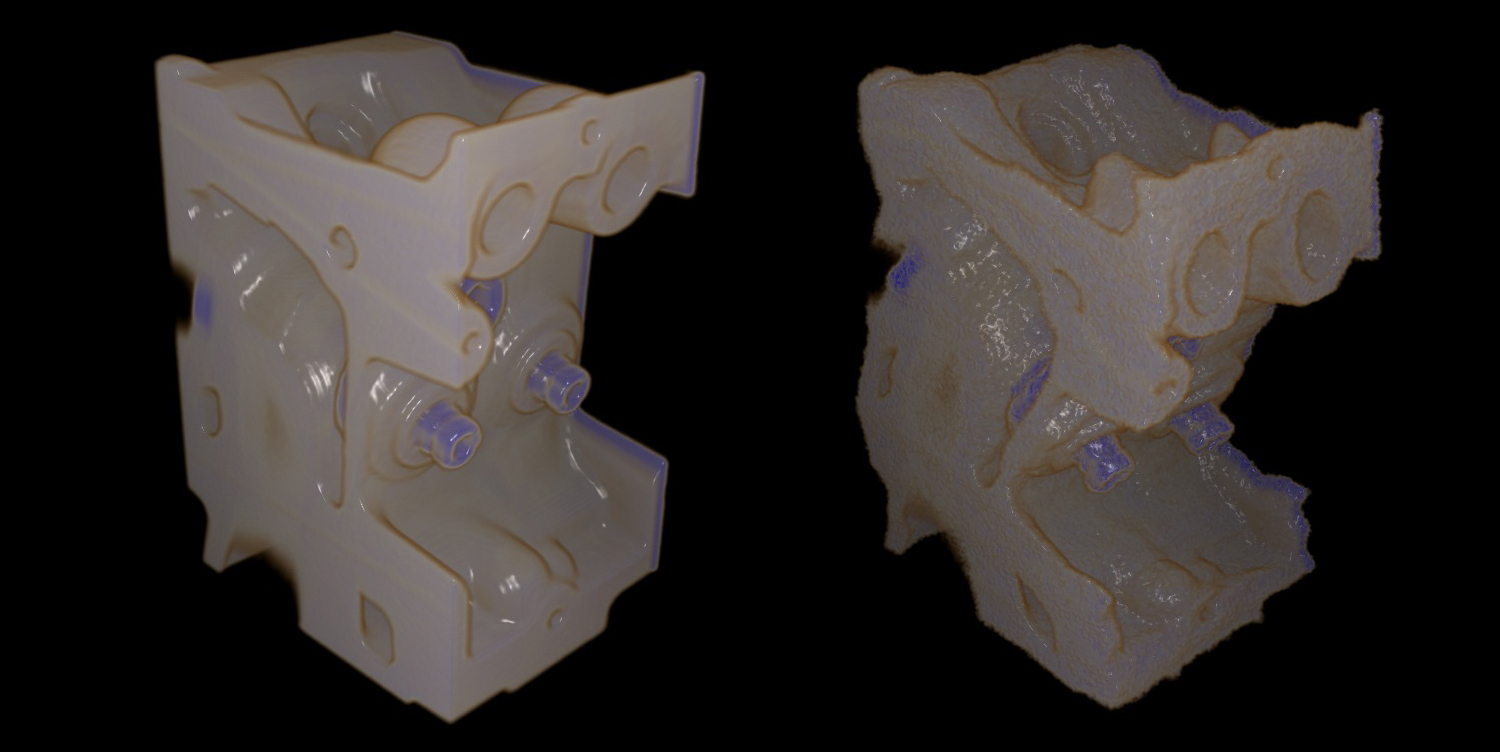}
	\caption{Before and after a fractal homeomorphism}
	\label{fig:engine block}
\end{figure}
\begin{figure}
	\centering
		\includegraphics[width=0.90\textwidth]{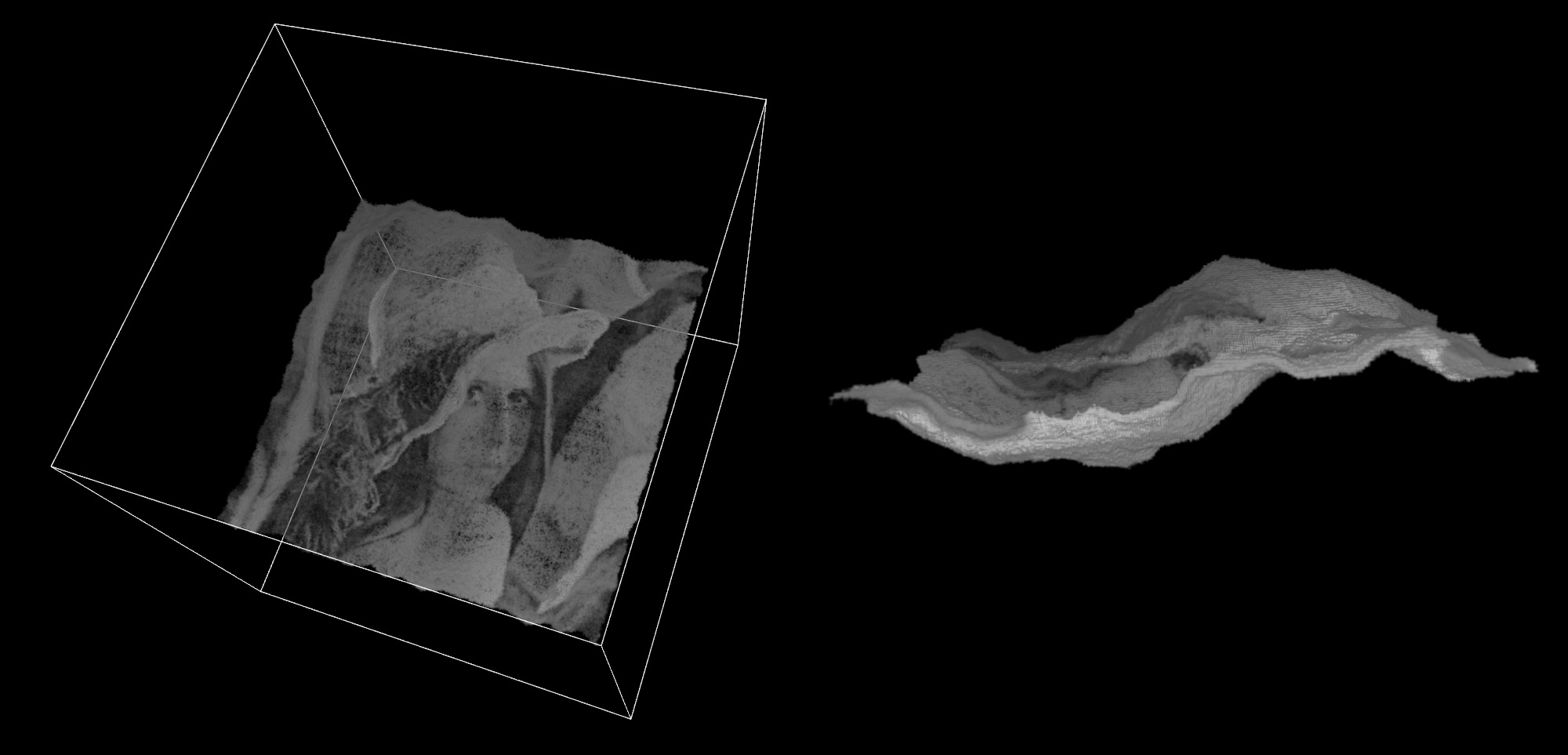}
	\caption{This crumpled photograph of Lena was modelled by applying a fractal homeomorphism to a flat photograph. Note that the second ange looks like is could be a terrain model.}
	\label{fig:photograph of Lena}
\end{figure}
\begin{figure}
	\centering
		\includegraphics[width=0.90\textwidth]{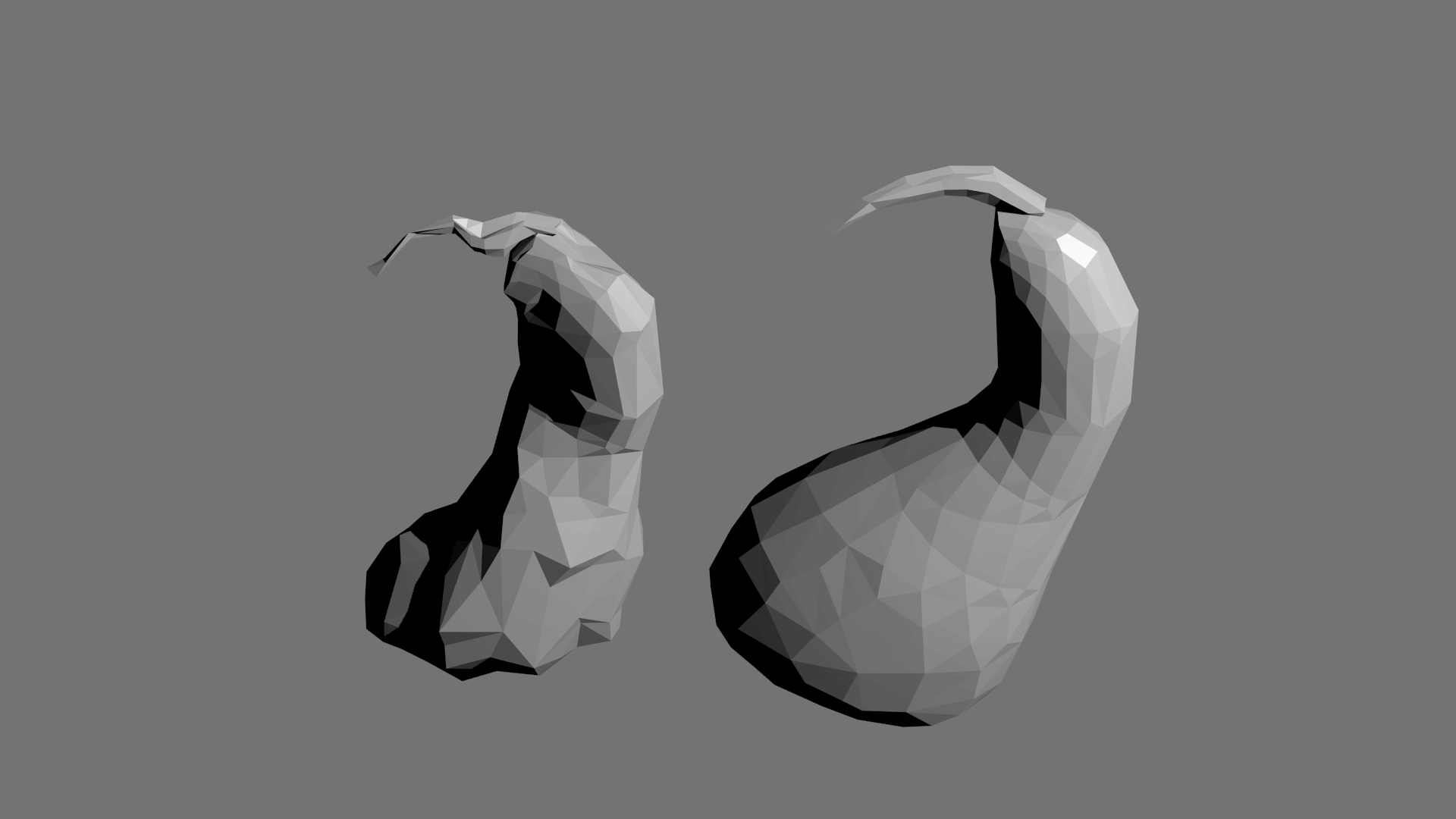}
	\caption{This is a simple rendering of a fractal homeomorphism applied to the mesh model of a gourd. On the right is the before and on the left is after the transformation is applied.}
	\label{fig:gourd mesh model}
\end{figure}

\section{CONCLUSION} 

Fractal transformations have a lot of potential in the computer graphics industry as seen in past and present uses, although there is still much to be done in developing useful applications for fractal transformations. There is also a lot to be done with the developement of algorithms, in particular decreasing the runtime for large datasets with IFS's consisting of complicated functions. Developing efficient paralell algorithms is also be important with the goal of producing real-time interactive fractal transformations and effects. User friendly and flexible algorithms that are able to dynamically choose the best algorithm for applying transformations are also things to look forward to in the near future.

\section*{ACKNOWLEDGEMENTS}

To someone.
Thanks

\bibliographystyle{ws-acs}
\bibliography{template}


\end{document}